\nonstopmode \numberwithin{equation}{section}
\newtheorem{theorem}{Theorem}[section]
 \newtheorem{corollary}{Corollary}[section]
\newtheorem{remark}{Remark}[section]
\begin{document}

\title[Hurwitz Lerch Zeta function]{Further Extension of the Generalized Hurwitz-Lerch Zeta Function of Two Variables}

\author[K.S. Nisar]{Kottakkaran Sooppy  Nisar*}

\address{Kottakkaran Sooppy  Nisar:    Department of Mathematics, College of Arts and Science-Wadi Aldawaser, 11991,
Prince Sattam bin Abdulaziz University, Alkharj, Saudi Arabia}
\email{n.sooppy@psau.edu.sa}

\subjclass[2010]{11M06, 11M35, 33B15, 33C60}
\keywords{gamma function; beta function; hypergeometric function; generalized Hurwitz-Lerch zeta function.}

\thanks{*Corresponding author}

\begin{abstract}
The main aim of this paper is to provide a new generalization of Hurwitz-Lerch Zeta function of two variables. We also investigate several interesting properties such as integral representations, summation formula, and a connection with the generalized hypergeometric function. To strengthen the main results we also consider some important special cases.
\end{abstract}

\maketitle

\section{Introduction}\label{sec-1}
The generalized hypergeometric function $F(-)$  \cite{Erdelyi} defined by   
\begin{align}\label{ghyp}
F\left(\beta_1,\cdots, \beta_p;\delta_1,\cdots,\delta_q;z\right)
&=_pF_q\left(\beta_1,\cdots, \beta_p;\delta_1,\cdots,\delta_q;z\right)\notag\\
&=\sum_{n=0}^{\infty}\frac{\prod_{i=1}^{p}\left(\beta_i\right)_{n}z^{n}}{\prod_{j=1}^{q}\left(\beta_j\right)_{n}n!},
\end{align}
where $p,q\in \mathbb{Z}^{+};b_{j}\neq 0,-1,-2,\cdots$.

The Appell hypergeometric function $F_{1}$ of two variables \cite{SK1985} is defined by 
\begin{eqnarray}
\aligned
F_{1}[a,b,b';c;z,t]=\sum_{k,l=0}^{\infty}\frac{(a)_{k+l}(b)_k(b')_{l}}{(c)_{k+l}}\frac{z^{k}}{k!}\frac{t^{l}}{l!},\\
=\sum_{k=0}^{\infty}\frac{(a)_{k}(b)_k}{(c)_{k}}~
_2F_1 \left[ \aligned a+k,\,b' &;\\
                       c+k &; \endaligned
                  \,\, t \right]\frac{z^{k}}{k!},\\
									\left(max\left\{\Re(z), \Re(t)\right\}\leq 1\quad \text{and}\quad \Re(a)>0\right).
\endaligned
\end{eqnarray}\label{Phi1}

The confluent forms of Humbert functions are \cite{SK1985}:
\begin{equation}
\Phi_{1}[a,b;c;z,t]=\sum_{k,l=0}^{\infty}\frac{(a)_{k+l}(b)_k}{(c)_{k+l}}\frac{z^{k}}{k!}\frac{t^{l}}{l!}, \quad \left(\left|z\right|< 1, \left|t\right|< \infty\right),
\end{equation}
\begin{equation}\label{Phi2}
\Phi_{2}[b,b';c;z,t]=\sum_{k,l=0}^{\infty}\frac{(b)_{k}(b')_l}{(c)_{k+l}}\frac{z^{k}}{k!}\frac{t^{l}}{l!}, \quad \left(\left|z\right|< \infty, \left|t\right|< \infty\right),
\end{equation}
and
\begin{equation}\label{Phi3}
\Phi_{3}[b;c;z,t]=\sum_{k,l=0}^{\infty}\frac{(b)_k}{(c)_{k+l}}\frac{z^{k}}{k!}\frac{t^{l}}{l!},\quad \left(\left|z\right|< \infty, \left|t\right|< \infty\right).
\end{equation}

The Appell's type generalized functions $M_{i}$ by considering product of two $_3F_2$ functions is given in \cite{MAK-GSA}. From these expansions, we recall one of the generalized Appell's type functions of two variables $M_{4}$ and is defined by
\begin{equation}\label{M4}
\aligned
M_{4}\left(\mu,\eta,\eta',\delta,\delta';\nu,\xi,\xi';x,y\right)
=\sum_{k=0}^{\infty}\sum_{l=0}^{\infty}\frac{(\mu)_{k+l}(\eta)_{k}(\eta')_{l}(\delta)_{k}(\delta')_{l}}{(\nu)_{k+l}(\xi)_{k}(\xi')_{l}}
\frac{x^{k}}{k!}\frac{y^{l}}{l!}.
\endaligned
\end{equation}

If we set $\mu=\nu, \delta=\xi, \delta'=\xi'$ in \eqref{M4} then
\begin{equation}\label{M4b}
M_{4}[\mu,\eta,\eta',\delta, \delta';\mu,\delta,\delta';x,y]=\left(1-x\right)^{-\eta}\left(1-y\right)^{-\eta'}.
\end{equation}

The Hurwitz-Lerch Zeta function $\Phi(z,s,a)$ is defined by (see \cite{HC2001, HC2012}):
\begin{equation}\label{Zeta}
\Phi(z,s,a)=\sum_{k=0}^{\infty}\frac{z^{k}}{(k+a)^{s}},
\end{equation}
$$\quad \left(a\in\mathbb{C}\setminus \mathbb{Z}_{0}^{-};s\in \mathbb{C}\right) \text {when} \quad \left|z\right|<1; \Re(s)>1 \text{when} \quad\left|z\right|=1.$$

{For more details about the properties and particular cases found in \cite{Erdelyi, HC2001, HC2012}. Various type of  generalizations, extensions, and properties of the Hurwitz-Lerch Zeta function can be found in \cite{CZ2001, CJS2008, JPS2011, LS2004, SJPS2011, SLR2013, SSPS2011, HMS2014}.}

Recently, Pathan and Daman \cite{PD2012} give another generalization of the form
\begin{equation}\label{PD-GHZ}
\aligned
&\Phi_{\alpha,\beta;\gamma, \lambda, \mu; \nu}\left(z,t,s,a\right):=\sum_{k,l=0}^{\infty}\frac{(\alpha)_{k}(\beta)_{k}(\lambda)_{l}(\mu)_{l}}{(\gamma)_{k}(\nu)_{l}k!l!}\frac{z^{k}t^{l}}{(k+l+a)^{s}}, \\
&\quad \gamma, \nu, a \neq \left\{0,-1,-2,\cdots,\right\}, s\in\mathbb{C};\\
&\quad\quad\quad\Re\left(s+\gamma+\nu-\alpha-\beta-\mu-\lambda\right)>0 \quad \text {when} \quad \left|z\right|=1 \quad \text{and} \quad\left|t\right|=1.
\endaligned
\end{equation}

Very recently, Choi and Parmar \cite{CP2017} introduced two variable generalization by 
\begin{equation}\label{EGHLZ}
\aligned
&\Phi_{\mu,\eta,\eta';\nu}\left(z,t,s,a\right):=\sum_{k,l=0}^{\infty}\frac{(\mu)_{k+l}(\eta)_{k}(\eta')_{l}}{(\nu)_{k+l}k!l!}
\frac{z^{k}t^{l}}{(k+l+a)^{s}},  \\
&\quad\left(\mu, \eta,\eta'\in\mathbb{C};\quad a, \nu\in\mathbb{C}\setminus \mathbb{Z}_{0}^{-};\quad s,z,t \in \mathbb{C} \quad \text{when} \quad \left|z\right|<1\quad \text{and} \quad \left|t\right|<1;\right. \\
&\left.\quad\quad\quad \text{and} \quad\Re\left(s+\nu-\mu-\eta-\eta'\right)>0 \quad \text {when} \quad \left|z\right|=1 \quad \text{and} \quad\left|t\right|=1\right) .
\endaligned
\end{equation}
 
{In this paper, we further extended the Hurwitz-Lerch Zeta function of two variables and is defined~by}
\begin{equation}
\begin{array}{ccccc}\label{FEGHLZ}
&\Phi_{\mu,\eta,\eta',\delta,\delta';\nu,\xi,\xi'}\left(z,t,s,a\right):=\sum_{k,l=0}^{\infty}\frac{(\mu)_{k+l}(\eta)_{k}(\eta')_{l}(\delta)_{k}(\delta')_{l}}{(\nu)_{k+l}(\xi)_{k}(\xi')_{l}k!l!}
\frac{z^{k}t^{l}}{(k+l+a)^{s}},\\
&\left(\mu, \eta,\eta', \delta, \delta' \in\mathbb{C}; a, \nu, \xi, \xi' \in\mathbb{C}\setminus \mathbb{Z}_{0}^{-};\, s,z,t \in \mathbb{C} \quad \text{when} \quad \left|z\right|<1 \quad \text{and} \quad \left|t\right|<1;\right.\\
&\left.\text{and}\quad\Re\left(s+\nu+\xi+\xi'-\mu-\eta-\eta'-\delta-\delta'\right)>0 \quad \text {when}  \left|z\right|=1
 \text{and} \left|t\right|=1\right).
\end{array}\end{equation}

{\bf {Special cases}:}   

Case 1. If $\delta=\xi, \delta'=\xi'$, then \eqref{FEGHLZ} reduces to (3) of \cite{CP2017} which is given in \eqref{EGHLZ}.

Case 2. If $\mu=\nu$ and $\delta=\xi, \delta'=\xi'$ in \eqref{FEGHLZ}, then we get the generalized Hurwitz-Lerch Zeta function of \cite{PD2012}:
\begin{equation}\label{HZ-Pathan}
\aligned
&\Phi_{\eta,\eta'}\left(z,t,s,a\right):=\sum_{k,l=0}^{\infty}\frac{(\eta)_{k}(\eta')_{l}}{k!l!}
\frac{z^{k}t^{l}}{(k+l+a)^{s}},  \\
&\quad\left(\eta, \eta'\in\mathbb{C};\quad a\in\mathbb{C}\setminus  \mathbb{Z}_{0}^{-};\quad s \in \mathbb{C} \quad \text{when} \quad \left|z\right|<1 \quad \text{and} \quad \left|t\right|<1;\right. \\
&\left.\quad\quad\quad \text{and} \quad\Re\left(s-\eta-\eta'\right)>0 \quad \text {when} \quad \left|z\right|=1 \quad \text{and} \quad\left|t\right|=1\right).
\endaligned
\end{equation}

The limiting cases of \eqref{FEGHLZ} are as follows:\\

Case 3.
If $\eta'\rightarrow \infty$ then we have
\begin{equation}\label{c3}
\aligned
&\Phi_{\mu,\eta,\delta,\delta';\nu,\xi,\xi'}\left(z,t,s,a\right):=\lim_{\eta'\rightarrow \infty}\left\{\Phi_{\mu,\eta,\eta',\delta,\delta';\nu,\xi,\xi'}\left(z,t/\eta',s,a\right)\right\}\\
&=\sum_{k,l=0}^{\infty}\frac{(\mu)_{k+l}(\eta)_{k}(\delta)_{k}(\delta')_{l}}{(\nu)_{k+l}(\xi)_{k}(\xi')_{l}k!l!}
\frac{z^{k}t^{l}}{(k+l+a)^{s}},  \\
&\quad\left(\mu, \eta, \delta, \delta' \in\mathbb{C}; a, \nu, \xi, \xi' \in\mathbb{C}\setminus  \mathbb{Z}_{0}^{-};\, s,z,t \in \mathbb{C} \quad \text{when} \quad \left|z\right|<1 \quad \text{and} \quad \left|t\right|<1;\right. \\
&\left.\text{and}\quad\Re\left(s+\nu+\xi+\xi'-\mu-\eta-\delta-\delta'\right)>0 \quad \text {when} \quad \left|z\right|=1  \text{and}\left|t\right|=1\right).
\endaligned
\end{equation}

Case 4.
If $\mu\rightarrow \infty$ then we have
\begin{equation}\label{c4}
\aligned
&\Phi_{\eta,\eta',\delta,\delta';\nu,\xi,\xi'}\left(z,t,s,a\right):=\lim_{\mu\rightarrow \infty}\left\{\Phi_{\mu,\eta,\eta',\delta,\delta';\nu,\xi,\xi'}\left(z/\mu,t/\mu,s,a\right)\right\}\\
&=\sum_{k,l=0}^{\infty}\frac{(\eta)_{k}(\eta')_{l}(\delta)_{k}(\delta')_{l}}{(\nu)_{k+l}(\xi)_{k}(\xi')_{l}k!l!}
\frac{z^{k}t^{l}}{(k+l+a)^{s}},  \\
&\quad\left(\eta, \eta', \delta, \delta' \in\mathbb{C}; a, \nu, \xi, \xi' \in\mathbb{C}\setminus  \mathbb{Z}_{0}^{-};\, s,z,t \in \mathbb{C} \quad \text{when} \quad \left|z\right|<1 \quad \text{and} \quad \left|t\right|<1;\right. \\
&\left.\text{and} \quad\Re\left(s+\nu+\xi+\xi'-\eta-\delta-\delta'\right)>0 \quad \text {when} \quad \left|z\right|=1 \quad \text{and} \quad\left|t\right|=1\right).
\endaligned
\end{equation}

Case 5.
If $min{(\left|\mu\right|,\left|\eta'\right|)} \rightarrow \infty$ then we have
\begin{equation}\label{c5}
\aligned
&\Phi_{\eta,\delta,\delta';\nu,\xi,\xi'}\left(z,t,s,a\right):=\lim_{min{(\left|\mu\right|,\left|\eta'\right|)} \rightarrow \infty}\left\{\Phi_{\mu,\eta,\eta',\delta,\delta';\nu,\xi,\xi'}\left(\frac{z}{\mu},\frac{t}{(\mu \eta')},s,a\right)\right\}\\
&=\sum_{k,l=0}^{\infty}\frac{(\eta)_{k}(\delta)_{k}(\delta')_{l}}{(\nu)_{k+l}(\xi)_{k}(\xi')_{l}k!l!}
\frac{z^{k}t^{l}}{(k+l+a)^{s}},  \\
&\quad\left(\eta, \delta, \delta' \in\mathbb{C}; a, \nu, \xi, \xi' \in\mathbb{C}\setminus  \mathbb{Z}_{0}^{-};\, s,z,t \in \mathbb{C} \quad \text{when} \quad \left|z\right|<1 \quad \text{and} \quad \left|t\right|<1;\right. \\
&\left.\quad\quad \text{and} \quad\Re\left(s+\nu+\xi+\xi'-\eta-\delta-\delta'\right)>0 \quad \text {when} \quad \left|z\right|=1 \quad \text{and} \quad\left|t\right|=1\right).
\endaligned
\end{equation}

\section{Integral Representations}

\begin{theorem}\label{Th-1}
The following integral representation of \eqref{FEGHLZ} holds true:
\begin{align}
&\Phi_{\mu,\eta,\eta',\delta,\delta';\nu,\xi,\xi'}\left(z,t,s,a\right)\notag\\
&=\frac{1}{\Gamma(s)}\int_{0}^{\infty}x^{s-1}e^{-ax}M_{4}\left(\mu,\eta,\eta',\delta,\delta';\nu,\xi,\xi';ze^{-x},te^{-x}\right)dx,\\
&\left(\min\left\{\Re(s),\Re(a)\right\}>0\quad \text{when} \left|z\right|\leq 1 (z\neq1), \left|t\right|\leq 1 (t\neq 1),\right.\notag\\
 &\left.\Re(s)>1, \text{when}~ z=1, t=1\right)\notag.
\end{align}
\end{theorem}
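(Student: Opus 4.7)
The plan is to start from the classical Eulerian-type integral representation of $(k+l+a)^{-s}$, namely
\begin{equation*}
\frac{1}{(k+l+a)^{s}}=\frac{1}{\Gamma(s)}\int_{0}^{\infty}x^{s-1}e^{-(k+l+a)x}\,dx,
\qquad \min\{\Re(s),\Re(k+l+a)\}>0,
\end{equation*}
which holds termwise for every pair $(k,l)\in\mathbb{N}_{0}^{2}$ since $\Re(a)>0$. Substituting this identity into the series in \eqref{FEGHLZ} produces
\begin{equation*}
\Phi_{\mu,\eta,\eta',\delta,\delta';\nu,\xi,\xi'}(z,t,s,a)
=\frac{1}{\Gamma(s)}\sum_{k,l=0}^{\infty}\frac{(\mu)_{k+l}(\eta)_{k}(\eta')_{l}(\delta)_{k}(\delta')_{l}}{(\nu)_{k+l}(\xi)_{k}(\xi')_{l}\,k!\,l!}z^{k}t^{l}\int_{0}^{\infty}x^{s-1}e^{-(k+l+a)x}\,dx.
\end{equation*}

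The next step is to interchange the summation and the integration. Once this is justified, we pull $x^{s-1}e^{-ax}$ out of the sum and regroup the $e^{-kx}$ and $e^{-lx}$ factors with $z^{k}$ and $t^{l}$, obtaining exactly the defining double series of $M_{4}$ evaluated at $(ze^{-x},te^{-x})$ by \eqref{M4}; this immediately yields the stated formula.

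The only real issue is to justify the interchange of $\sum$ and $\int$. I would handle it by Fubini--Tonelli applied to the absolute values: the integrand is nonnegative after taking moduli, and the needed estimate is
\begin{equation*}
\int_{0}^{\infty}x^{\Re(s)-1}e^{-\Re(a)x}\sum_{k,l\geq 0}\left|\frac{(\mu)_{k+l}(\eta)_{k}(\eta')_{l}(\delta)_{k}(\delta')_{l}}{(\nu)_{k+l}(\xi)_{k}(\xi')_{l}\,k!\,l!}\right||z|^{k}|t|^{l}e^{-(k+l)x}\,dx<\infty.
\end{equation*}
When $|z|<1$ and $|t|<1$, the inner double series is dominated by the convergent series $M_{4}(\dots;|z|,|t|)$ uniformly for $x\geq 0$, and the remaining $x$-integral converges since $\Re(s)>0$ and $\Re(a)>0$. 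When $|z|=|t|=1$, I would split the integral at some $x_{0}>0$: on $[x_{0},\infty)$ the factor $e^{-(k+l)x}$ provides a geometric damping that restores absolute convergence, while on $[0,x_{0}]$ one substitutes $u=e^{-x}$ and uses the boundary convergence hypothesis $\Re(s+\nu+\xi+\xi'-\mu-\eta-\eta'-\delta-\delta')>0$ from \eqref{FEGHLZ} together with $\Re(s)>1$ (to handle the factor $x^{s-1}$ near $x=0$).

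The main obstacle is therefore analytic rather than combinatorial: producing a clean dominating bound for $M_{4}(\dots;ze^{-x},te^{-x})$ that is uniformly integrable against $x^{s-1}e^{-ax}$ on the full half-line in the boundary regime $|z|=|t|=1$. Once that estimate is in hand, Fubini applies and the proof reduces to the one-line computation sketched above.
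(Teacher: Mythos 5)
Your proposal follows essentially the same route as the paper: substitute the Eulerian integral $\frac{1}{(k+l+a)^{s}}=\frac{1}{\Gamma(s)}\int_{0}^{\infty}x^{s-1}e^{-(k+l+a)x}\,dx$ into the defining double series, interchange summation and integration, and identify the resulting inner sum with $M_{4}(\dots;ze^{-x},te^{-x})$ via \eqref{M4}. If anything, your treatment of the interchange is more careful than the paper's, which simply asserts it is ``verified by uniform convergence of the involved series under the given conditions.''
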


\begin{proof}
Using the following Eulerian integral
\begin{equation}\label{EI}
\frac{1}{\left(k+l+a\right)^{s}}:=\frac{1}{\Gamma(s)}\int_{0}^{\infty}t^{s-1}e^{-(k+l+a)t}dt \quad \left(\min{\Re(s),\Re(a)}>0, k,l \in \mathbb{N}_{0}\right)
\end{equation}
in \eqref{FEGHLZ}, we get
\begin{eqnarray*}
&&\Phi_{\mu,\eta,\eta',\delta,\delta';\nu,\xi,\xi'}\left(z,t,s,a\right)\\
&=&\sum_{k,l=0}^{\infty}\frac{(\mu)_{k+l}(\eta)_{k}(\eta')_{l}(\delta)_{k}(\delta')_{l}}{(\nu)_{k+l}(\xi)_{k}(\xi')_{l}}
\frac{z^{k}t^{l}}{k!l!}\frac{1}{\Gamma(s)}\int_{0}^{\infty}x^{s-1}e^{-(k+l+a)x}dx.  \\
\end{eqnarray*}

Interchanging the order of integration and summation, which is verified by
uniform convergence of the involved series under the given conditions, we have
\begin{eqnarray}\label{theq1}
&&\Phi_{\mu,\eta,\eta',\delta,\delta';\nu,\xi,\xi'}\left(z,t,s,a\right)\notag\\
&=&\frac{1}{\Gamma(s)}\int_{0}^{\infty}x^{s-1}e^{-ax}\sum_{k,l=0}^{\infty}\frac{(\mu)_{k+l}(\eta)_{k}(\eta')_{l}(\delta)_{k}(\delta')_{l}}{(\nu)_{k+l}(\xi)_{k}(\xi')_{l}}
\frac{z^{k}t^{l}}{k!l!}\left(e^{-t}\right)^{k}\left(e^{-t}\right)^{l}dx.
\end{eqnarray}

In view of \eqref{M4}, we arrived the desired result.
\end{proof}

Similarly, if we use \eqref{EI} in the limiting cases \eqref{c3}, \eqref{c4} and \eqref{c5} then we obtain the following~corollaries:
\begin{corollary}\label{cor1}
The following integral representations for
$\Phi_{\mu,\eta,\delta,\delta';\nu,\xi,\xi'}\left(z,t,s,a\right), \Phi_{\eta,\eta',\delta,\delta';\nu,\xi,\xi'}\left(z,t,s,a\right)$ and $\Phi_{\eta,\delta,\delta';\nu,\xi,\xi'}\left(z,t,s,a\right)$ in \eqref{c3}, \eqref{c4} and \eqref{c5} holds true when $\delta=\xi, \delta'=\xi'$ :
\begin{equation}
\Phi_{\mu,\eta;\nu}\left(z,t,s,a\right)\\
=\frac{1}{\Gamma(s)}\int_{0}^{\infty}x^{s-1}e^{-ax}\Phi_{1}\left(\mu,\eta;\nu;ze^{-x},te^{-x}\right)dx,\\ 
\end{equation}
which is (14) of \cite{CP2017}.
\begin{equation}
\Phi_{\eta,\eta';\nu}\left(z,t,s,a\right)\\
=\frac{1}{\Gamma(s)}\int_{0}^{\infty}x^{s-1}e^{-ax}\Phi_{2}\left(\eta,\eta';\nu;ze^{-x},te^{-x}\right)dx,\\
\end{equation}
which is (15) of \cite{CP2017} 
and
\begin{equation}
\Phi_{\eta;\nu}\left(z,t,s,a\right)
=\frac{1}{\Gamma(s)}\int_{0}^{\infty}x^{s-1}e^{-ax}\Phi_{3}\left(\eta;\nu;ze^{-x},te^{-x}\right)dx.
\end{equation}
$\left(\min\left\{\Re(s),\Re(a)\right\}>0 \text{when} \left|z\right|\leq 1 (z\neq1), \left|t\right|\leq 1 (t\neq 1), \right.\\$
$\left.\quad\quad\quad\quad\quad\quad\quad\Re(s)>1, \text{when}~ z=1, t=1\right)$,
which is (16) of \cite{CP2017}.
\end{corollary}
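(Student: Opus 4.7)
The strategy is to repeat the proof pattern of Theorem~\ref{Th-1} separately for each of the three limiting series \eqref{c3}, \eqref{c4}, \eqref{c5}, specialized by the extra assumption $\delta=\xi$, $\delta'=\xi'$. Under this specialization the Pochhammer ratios $(\delta)_{k}/(\xi)_{k}$ and $(\delta')_{l}/(\xi')_{l}$ collapse to $1$, so each double series simplifies and, after inserting the Eulerian integral, its inner double sum will be recognized as one of the confluent Humbert functions $\Phi_{1}$, $\Phi_{2}$, $\Phi_{3}$ evaluated at $(ze^{-x},te^{-x})$.

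Concretely, I would first substitute the Eulerian representation \eqref{EI},
\[
\frac{1}{(k+l+a)^{s}}=\frac{1}{\Gamma(s)}\int_{0}^{\infty}x^{s-1}e^{-(k+l+a)x}\,dx,
\]
into each of the three specialized series. The factor $e^{-(k+l)x}$ combines with $z^{k}t^{l}$ to produce $(ze^{-x})^{k}(te^{-x})^{l}$, while $e^{-ax}$ is pulled outside the sum. I would then interchange the order of summation and integration, justified by exactly the same uniform-convergence argument used in the proof of Theorem~\ref{Th-1} under the hypothesis $\min\{\Re(s),\Re(a)\}>0$ together with the stated restrictions on $z,t$. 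After the swap, the inner sum in the first case reads
\[
\sum_{k,l=0}^{\infty}\frac{(\mu)_{k+l}(\eta)_{k}}{(\nu)_{k+l}}\frac{(ze^{-x})^{k}}{k!}\frac{(te^{-x})^{l}}{l!},
\]
which is precisely $\Phi_{1}(\mu,\eta;\nu;ze^{-x},te^{-x})$; similarly the second and third specialized inner sums match $\Phi_{2}(\eta,\eta';\nu;ze^{-x},te^{-x})$ from \eqref{Phi2} and $\Phi_{3}(\eta;\nu;ze^{-x},te^{-x})$ from \eqref{Phi3}, respectively. Reassembling with the outer factor $\Gamma(s)^{-1}\int_{0}^{\infty}x^{s-1}e^{-ax}(\cdots)\,dx$ yields the three claimed integral representations.

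The only technical point is the rigorous justification of the interchange of sum and integral, particularly on the boundary $|z|=|t|=1$ where $\Re(s)>1$ is needed for absolute convergence. Since this boundary analysis is structurally identical to that in Theorem~\ref{Th-1} and the specializations $\delta=\xi$, $\delta'=\xi'$ can only improve convergence by removing the $(\delta)_{k}(\delta')_{l}/\bigl((\xi)_{k}(\xi')_{l}\bigr)$ factors, no genuinely new estimate is required; the three formulas follow as immediate corollaries of the specialization of Theorem~\ref{Th-1} combined with the three limiting procedures \eqref{c3}, \eqref{c4}, \eqref{c5}.
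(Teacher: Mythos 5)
Your proposal is correct and follows essentially the same route as the paper, which simply states that the corollary is obtained by applying the Eulerian integral \eqref{EI} to the limiting cases \eqref{c3}, \eqref{c4}, \eqref{c5} exactly as in the proof of Theorem \ref{Th-1}; your identification of the specialized inner double sums (with $(\delta)_k/(\xi)_k=(\delta')_l/(\xi')_l=1$) as $\Phi_1$, $\Phi_2$, $\Phi_3$ is the intended argument. You have in fact supplied more detail than the paper does.
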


\begin{corollary}\label{cor2}
In view of \eqref{M4b}, we have
\begin{align}\label{18cp}
&\Phi_{\mu,\eta, \eta';\mu}\left(z,t,s,a\right)\notag\\
&=\frac{1}{\Gamma(s)}\int_{0}^{\infty}\frac{x^{s-1}e^{-ax}}{\left(1-ze^{-x}\right)^{\eta}\left(1-te^{-x}\right)^{-\eta'}}dx,
\end{align}
$\left(\min\Re(s)>0,\Re(a)>0 \quad \text{when} \left|z\right|\leq 1 (z\neq1), \left|t\right|\leq 1 (t\neq 1), \right.$\\
$\left.\quad\quad\quad\quad\quad\quad\quad\quad\quad\quad\quad\quad\quad\quad\quad\quad\Re(s)>1, \text{when}~ z=1, t=1\right).$
\end{corollary}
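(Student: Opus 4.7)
The plan is to obtain this corollary as a direct specialization of Theorem \ref{Th-1} combined with the reduction formula \eqref{M4b} for the generalized Appell function $M_{4}$. No new analytic work is needed; one just matches parameters carefully and reads off the result.

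First, I would specialize the parameters in the series \eqref{FEGHLZ} by setting $\nu=\mu$, $\xi=\delta$, and $\xi'=\delta'$. Under this choice the ratios $(\mu)_{k+l}/(\nu)_{k+l}$, $(\delta)_{k}/(\xi)_{k}$, and $(\delta')_{l}/(\xi')_{l}$ each collapse to $1$, so the double series reduces to
\[
\Phi_{\mu,\eta,\eta',\delta,\delta';\mu,\delta,\delta'}(z,t,s,a)=\sum_{k,l=0}^{\infty}\frac{(\eta)_{k}(\eta')_{l}}{k!\,l!}\,\frac{z^{k}t^{l}}{(k+l+a)^{s}},
\]
which is precisely the object denoted $\Phi_{\mu,\eta,\eta';\mu}(z,t,s,a)$ on the left-hand side of \eqref{18cp}. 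This identification of notation justifies rewriting the left-hand side in the shorter form.

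Next, I would apply Theorem \ref{Th-1} with the same specialization $\nu=\mu$, $\xi=\delta$, $\xi'=\delta'$. This yields
\[
\Phi_{\mu,\eta,\eta';\mu}(z,t,s,a)=\frac{1}{\Gamma(s)}\int_{0}^{\infty}x^{s-1}e^{-ax}M_{4}\!\left(\mu,\eta,\eta',\delta,\delta';\mu,\delta,\delta';ze^{-x},te^{-x}\right)dx.
\]
Now the integrand carries exactly the parameter configuration appearing in \eqref{M4b}. Invoking that reduction with the arguments $ze^{-x}$ and $te^{-x}$ in place of $x$ and $y$ replaces the $M_{4}$ factor by $(1-ze^{-x})^{-\eta}(1-te^{-x})^{-\eta'}$, producing the closed-form integrand displayed on the right-hand side of \eqref{18cp}.

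The only point that requires a brief check is legitimacy of the substitution: for $x>0$ and $|z|\leq 1$, $|t|\leq 1$ (with $z\neq 1$, $t\neq 1$ in the boundary case), one has $|ze^{-x}|<1$ and $|te^{-x}|<1$, so the series defining $M_{4}$ converges on the range of integration and \eqref{M4b} applies pointwise; the convergence conditions at $x=0$ and $x=\infty$ then come directly from those already established in Theorem \ref{Th-1}, giving the stated hypotheses $\Re(s),\Re(a)>0$ (and $\Re(s)>1$ when $z=t=1$). Since every step is either a substitution or a previously established identity, there is no genuine obstacle; the proof is essentially a one-line citation of \eqref{M4b} inside the integral representation of Theorem \ref{Th-1}.
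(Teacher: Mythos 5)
Your proof is correct and follows exactly the route the paper intends: specialize Theorem \ref{Th-1} with $\nu=\mu$, $\xi=\delta$, $\xi'=\delta'$ and invoke the reduction \eqref{M4b} pointwise under the integral, the pointwise applicability being guaranteed by $|ze^{-x}|<1$ and $|te^{-x}|<1$ for $x>0$. Note only that your (correct) integrand $\left(1-ze^{-x}\right)^{-\eta}\left(1-te^{-x}\right)^{-\eta'}$ reveals a sign typo in the exponent of the second factor as printed in \eqref{18cp}.
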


\begin{remark}\label{rem1}
If we take $t=0$ in \eqref{18cp}, then it gives (19) of \cite{CP2017} and by setting $t=0, \eta=1$ then \eqref{18cp} reduces to  (20) of \cite{CP2017}
\end{remark}

\begin{theorem}\label{Th2}
Each of the following integrals for $\Phi_{\mu,\eta,\eta',\delta,\delta';\nu,\xi,\xi'}\left(z,t,s,a\right)$ holds true
\begin{eqnarray}
\aligned
&\Phi_{\mu,\eta,\eta',\delta,\delta';\nu,\xi,\xi'}\left(z,t,s,a\right)\\
&=\frac{\Gamma(\nu)}{\Gamma(\mu)\Gamma(\nu-\mu)}\int_{0}^{\infty}\frac{y^{\mu-1}}{(1+y)^{\nu}}\Phi_{\eta,\eta',\xi,\delta;\delta',\xi'}\left(\frac{zy}{1+y},\frac{ty}{1+y},s,a\right),
\endaligned
\end{eqnarray}
and
\begin{eqnarray}
\aligned
&\Phi_{\mu,\eta,\eta',\delta,\delta';\nu,\xi,\xi'}\left(z,t,s,a\right)\\
&=\frac{\Gamma(\nu)}{\Gamma(s)\Gamma(\mu)\Gamma(\nu-\mu)}\int_{0}^{\infty}\int_{0}^{\infty}\frac{x^{s-1}e^{-ax}y^{\mu-1}}{(1+y)^{\nu}}\\
&\times~\sum_{k=0}^{\infty}\frac{(\eta)_k(\delta)_k}{k!(\xi)_k} \left(\frac{zye^{-x}}{1+y}\right)^{k}\sum_{l=0}^{\infty}\frac{(\eta')_l(\delta')_k}{l!(\xi')_k} \left(\frac{tye^{-x}}{1+y}\right)^{l}dxdy.
\endaligned
\end{eqnarray}
\end{theorem}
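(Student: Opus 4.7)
The key device is the Eulerian Beta integral
$$\frac{(\mu)_{k+l}}{(\nu)_{k+l}}=\frac{\Gamma(\nu)}{\Gamma(\mu)\,\Gamma(\nu-\mu)}\int_{0}^{\infty}\frac{y^{\mu+k+l-1}}{(1+y)^{\nu+k+l}}\,dy,\qquad \Re(\nu)>\Re(\mu)>0,$$
which decouples the coupling Pochhammer ratio $(\mu)_{k+l}/(\nu)_{k+l}$ from the double summation in \eqref{FEGHLZ}. The plan is to insert this representation into the series defining $\Phi_{\mu,\eta,\eta',\delta,\delta';\nu,\xi,\xi'}$, interchange the order of summation and integration, and then identify the resulting inner series.

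Concretely, I would first substitute the Beta representation into \eqref{FEGHLZ}, pull the $y$-independent factor $y^{\mu-1}/(1+y)^{\nu}$ out of the sum, and absorb the residual $y^{k+l}/(1+y)^{k+l}$ into the monomials, so that $z^{k}t^{l}$ is replaced by $\bigl(zy/(1+y)\bigr)^{k}\bigl(ty/(1+y)\bigr)^{l}$. The inner double series that remains is precisely the Hurwitz--Lerch-type function on the right-hand side, since the factor $(\mu)_{k+l}/(\nu)_{k+l}$ has been stripped off. This produces the first integral representation.

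For the second formula I would combine the above with the Eulerian integral
$$\frac{1}{(k+l+a)^{s}}=\frac{1}{\Gamma(s)}\int_{0}^{\infty}x^{s-1}e^{-(k+l+a)x}\,dx$$
already used in Theorem~\ref{Th-1}. After inserting both integral representations simultaneously into \eqref{FEGHLZ} and reversing the order of the double sum with the two integrations, the factor $e^{-(k+l)x}$ merges with $\bigl(zy/(1+y)\bigr)^{k}\bigl(ty/(1+y)\bigr)^{l}$ to furnish the arguments $zye^{-x}/(1+y)$ and $tye^{-x}/(1+y)$ displayed in the claim. The crucial point is that, once both coupling structures $(\mu)_{k+l}/(\nu)_{k+l}$ and $1/(k+l+a)^{s}$ have been dissolved into their respective integrals, the inner double sum splits as a product of two one-variable hypergeometric-type series, exactly yielding the two factored sums in the statement.

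The main technical obstacle is justifying the interchange of summation with the $y$-integration (and, for the second formula, additionally with the $x$-integration). This rests on the absolute convergence of the series \eqref{FEGHLZ} under the stated restrictions on $z,t$, together with integrability of $y^{\mu-1}/(1+y)^{\nu}$ for $\Re(\nu)>\Re(\mu)>0$ and of $x^{s-1}e^{-ax}$ for $\min\{\Re(s),\Re(a)\}>0$; a Fubini--Tonelli argument then legitimises the exchange, in exact parallel with the uniform-convergence justification invoked in the proof of Theorem~\ref{Th-1}.
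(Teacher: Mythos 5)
Your proposal is correct and follows essentially the same route as the paper: the Beta integral $\frac{(\mu)_{k+l}}{(\nu)_{k+l}}=\frac{\Gamma(\nu)}{\Gamma(\mu)\Gamma(\nu-\mu)}\int_{0}^{\infty}\frac{y^{\mu+k+l-1}}{(1+y)^{\nu+k+l}}\,dy$ to strip the coupling Pochhammer ratio for the first formula, and the same device combined with the Eulerian integral for $(k+l+a)^{-s}$ (as in Theorem \ref{Th-1}) followed by factorization of the decoupled double sum for the second. Your explicit Fubini--Tonelli justification is slightly more careful than the paper's brief appeal to interchanging summation and integration, but the argument is the same.
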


\begin{proof}
Setting $a=\mu+k+l, b=\nu+k+l$ in the Eulerian beta function formula,
\begin{equation}
B\left(a,b-a\right)=\frac{\Gamma(a)\Gamma(b-a)}{\Gamma(b)}=\int_{0}^{\infty}\frac{y^{a-1}}{(1+y)^{b}}dy, \Re(b)>\Re(a)>0,
\end{equation}
gives
\begin{equation}\label{pf2-1}
\frac{\Gamma(\mu+k+l)\Gamma(\nu-\mu)}{\Gamma(\nu+k+l)}=\int_{0}^{\infty}\frac{y^{\mu+k+l-1}}{(1+y)^{\nu+k+l}}dy,
\end{equation}
\begin{eqnarray*} 
\Rightarrow \frac{(\mu)_{k+l}\Gamma(\mu)\Gamma(\nu-\mu)}{(\nu)_{k+l}\Gamma(\nu)}=\int_{0}^{\infty}\frac{y^{\mu+k+l-1}}{(1+y)^{\nu+k+l}}dy,\\
\quad \Re(\nu)>\Re(\mu)>0, k,l \in \mathbb{N}.
\end{eqnarray*}
\begin{equation} \label{Pf2-2}
\Rightarrow \frac{(\mu)_{k+l}}{(\nu)_{k+l}}=\frac{\Gamma(\nu)}{\Gamma(\mu)\Gamma(\nu-\mu)}\int_{0}^{\infty}\frac{y^{\mu+k+l-1}}{(1+y)^{\nu+k+l}}dy.
\end{equation}

Now substituting \eqref{Pf2-2} in \eqref{FEGHLZ}, we get
\begin{eqnarray}
\aligned
&\Phi_{\mu,\eta,\eta',\delta,\delta';\nu,\xi,\xi'}\left(z,t,s,a\right)\\
&=\sum_{k=0}^{\infty}\frac{\Gamma(\nu)}{\Gamma(\mu)\Gamma(\nu-\mu)}\int_{0}^{\infty}\frac{y^{\mu+k+l-1}}{(1+y)^{\nu+k+l}}
\frac{(\eta)_{k}(\eta')_{l}(\delta)_{k}(\delta')_{l}}{(\xi)_{k}(\xi')_{l}k!l!}
\frac{z^{k}t^{l}}{(k+l+a)^{s}}dy
\endaligned
\end{eqnarray}
interchanging integration and summation gives
\begin{eqnarray}
\aligned
&\Phi_{\mu,\eta,\eta',\delta,\delta';\nu,\xi,\xi'}\left(z,t,s,a\right)\\
&=\frac{\Gamma(\nu)}{\Gamma(\mu)\Gamma(\nu-\mu)}\int_{0}^{\infty}\frac{y^{\mu-1}}{(1+y)^{\nu}}\sum_{k=0}^{\infty}
\frac{(\eta)_{k}(\eta')_{l}(\delta)_{k}(\delta')_{l}}{(\xi)_{k}(\xi')_{l}k!l!}
\left(\frac{zy}{1+y}\right)^{k}\left(\frac{ty}{1+y}\right)^{l}\frac{1}{\left(k+l+a\right)^{s}}dy.
\endaligned
\end{eqnarray}

In view of \eqref{FEGHLZ} and \eqref{PD-GHZ} we arrived the desired result.

Now, we prove the second integral.
From \eqref{theq1}, $\Phi_{\mu,\eta,\eta',\delta,\delta';\nu,\xi,\xi'}\left(z,t,s,a\right)$ can be written as
\begin{eqnarray*}
&&\Phi_{\mu,\eta,\eta',\delta,\delta';\nu,\xi,\xi'}\left(z,t,s,a\right)\\
&=&\frac{1}{\Gamma(s)}\int_{0}^{\infty}x^{s-1}e^{-ax}\sum_{k,l=0}^{\infty}
\frac{(\mu)_{k+l}(\eta)_{k}(\eta')_{l}(\delta)_{k}(\delta')_{l}}{(\nu)_{k+l}(\xi)_{k}(\xi')_{l}}
\frac{\left(ze^{-x}\right)^{k}}{k!}\frac{\left(te^{-x}\right)^{l}}{l!}dx,
\end{eqnarray*}

Now using \eqref{Pf2-2}, we get
\begin{align*}
\Phi_{\mu,\eta,\eta',\delta,\delta';\nu,\xi,\xi'}\left(z,t,s,a\right)&=\frac{1}{\Gamma(s)}\int_{0}^{\infty}x^{s-1}e^{-ax}\sum_{k,l=0}^{\infty}\frac{\Gamma(\nu)}{\Gamma(\mu)\Gamma(\nu-\mu)}\int_{0}^{\infty}\frac{y^{\mu+k+l-1}}{(1+y)^{\nu+k+l}}dy\\
&\times \frac{(\eta)_{k}(\eta')_{l}(\delta)_{k}(\delta')_{l}}{(\xi)_{k}(\xi')_{l}}\frac{\left(ze^{-x}\right)^{k}}{k!}\frac{\left(te^{-x}\right)^{l}}{l!}dx,\\
&= \frac{\Gamma(\nu)}{\Gamma(s)\Gamma(\mu)\Gamma(\nu-\mu)}\int_{0}^{\infty}\int_{0}^{\infty}\frac{x^{s-1}e^{-ax}y^{\mu-1}}{(1+y)^{\nu}}\\
&\times~\sum_{k=0}^{\infty}\frac{(\eta)_k(\delta)_k}{(\xi)_{k}k!} \left(\frac{zye^{-x}}{1+y}\right)^{k}\sum_{l=0}^{\infty}\frac{(\eta')_l(\delta')_l}{(\xi')_{l}l!} \left(\frac{tye^{-x}}{1+y}\right)^{l}dxdy.
\end{align*}
\end{proof}

\begin{corollary}\label{cor3}
If $\delta=\delta'=1$ and $\xi=\xi'=1$,  then we get the result (22) of \cite{CP2017} as
\begin{align*}
\Phi_{\mu,\eta,\eta';\nu}\left(z,t,s,a\right)
&= \frac{\Gamma(\nu)}{\Gamma(s)\Gamma(\mu)\Gamma(\nu-\mu)}\\
&\times\int_{0}^{\infty}\int_{0}^{\infty}\frac{x^{s-1}e^{-ax}y^{\mu-1}}{(1+y)^{\nu}}\left(1-\frac{zye^{-x}}{1+y}\right)^{-\eta}\left(1-\frac{tye^{-x}}{1+y}\right)^{-\eta'}dxdy, \\
&\quad\quad\quad\quad\left(\Re(\nu)>\Re(\mu)>0;\min\left\{\Re(s), \Re(a)\right\}>0\right).
\end{align*}
\end{corollary}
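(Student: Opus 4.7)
The plan is to specialize the second integral formula of Theorem~\ref{Th2} by setting $\delta=\delta'=1$ and $\xi=\xi'=1$, and to recognize the two inner sums as binomial series. Since $(1)_k=k!$, these choices force the ratios $(\delta)_k/(\xi)_k$ and $(\delta')_l/(\xi')_l$ to collapse to $1$, so the double sum appearing inside the integral of Theorem~\ref{Th2} becomes the product
\[
\sum_{k=0}^{\infty}\frac{(\eta)_k}{k!}\left(\frac{zye^{-x}}{1+y}\right)^{k}\cdot\sum_{l=0}^{\infty}\frac{(\eta')_l}{l!}\left(\frac{tye^{-x}}{1+y}\right)^{l}.
\]

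Next I would apply the generalized binomial identity $\sum_{k=0}^{\infty}(\alpha)_k w^{k}/k!=(1-w)^{-\alpha}$ (valid for $|w|<1$) to each factor with $\alpha=\eta$, $w=zye^{-x}/(1+y)$ and $\alpha=\eta'$, $w=tye^{-x}/(1+y)$, respectively, which directly produces the two factors $(1-zye^{-x}/(1+y))^{-\eta}$ and $(1-tye^{-x}/(1+y))^{-\eta'}$ on the right-hand side. On the left-hand side, I would invoke Case~1 of the special cases listed after \eqref{FEGHLZ} to identify $\Phi_{\mu,\eta,\eta',1,1;\nu,1,1}$ with $\Phi_{\mu,\eta,\eta';\nu}$ as defined in \eqref{EGHLZ}. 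Combining these two observations yields the target representation under the stated conditions $\Re(\nu)>\Re(\mu)>0$ and $\min\{\Re(s),\Re(a)\}>0$, which are inherited directly from Theorem~\ref{Th2}.

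The main (and essentially only) technical point is to justify passing the binomial summations through the double integral. Since $y/(1+y)<1$ for $y>0$ and $e^{-x}\leq 1$ for $x\geq 0$, the arguments of the two binomial series satisfy $|zye^{-x}/(1+y)|<1$ and $|tye^{-x}/(1+y)|<1$ throughout the region of integration whenever $|z|\leq 1$ and $|t|\leq 1$, so both series converge absolutely. The resulting integrand is dominated by $x^{\Re(s)-1}e^{-\Re(a)x}y^{\Re(\mu)-1}/(1+y)^{\Re(\nu)}$ times a locally bounded factor, so Fubini's theorem and the same dominated-convergence argument already used to establish Theorem~\ref{Th2} apply, and no new analytic work is required.
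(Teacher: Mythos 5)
Your proposal is correct and follows the same route the paper intends: Corollary~\ref{cor3} is the immediate specialization $\delta=\delta'=\xi=\xi'=1$ of the second integral in Theorem~\ref{Th2}, with the two inner sums collapsing via $(1)_k=k!$ to the binomial series $\sum_{k\ge 0}(\eta)_k w^k/k!=(1-w)^{-\eta}$ for $|w|<1$. The paper offers no further argument for this corollary, and your justification of the convergence (using $y/(1+y)<1$ and $e^{-x}\le 1$) is exactly what is needed.
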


\begin{theorem}\label{Th4}
The following summation formula hold true.
\begin{equation}\label{the4-eq1}
\sum_{r=0}^{\infty}\frac{(s)_r}{r!}\Phi_{\mu,\eta,\eta',\delta,\delta';\nu,\xi,\xi'}\left(z,t,s+r,a\right)x^{r}
=\Phi_{\mu,\eta,\eta',\delta,\delta';\nu,\xi,\xi'}\left(z,t,s,a-x\right), \quad \left(\left|x\right|<\left|a\right|; s\neq 1\right).
\end{equation}
\end{theorem}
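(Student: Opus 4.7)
My plan is to start from the left-hand side, substitute the series definition \eqref{FEGHLZ} of $\Phi_{\mu,\eta,\eta',\delta,\delta';\nu,\xi,\xi'}(z,t,s+r,a)$, interchange the resulting triple sum, and collapse the inner sum in $r$ by means of the binomial series. Writing the LHS as
\[
\sum_{r=0}^{\infty}\frac{(s)_r}{r!}x^{r}\sum_{k,l=0}^{\infty}\frac{(\mu)_{k+l}(\eta)_{k}(\eta')_{l}(\delta)_{k}(\delta')_{l}}{(\nu)_{k+l}(\xi)_{k}(\xi')_{l}\,k!\,l!}\,\frac{z^{k}t^{l}}{(k+l+a)^{s+r}},
\]
I would bring the sum over $r$ to the inside of the sum over $k,l$, extract the factor $(k+l+a)^{-s}$, and recognize the remaining series as
\[
\sum_{r=0}^{\infty}\frac{(s)_r}{r!}\Bigl(\frac{x}{k+l+a}\Bigr)^{r}=\Bigl(1-\frac{x}{k+l+a}\Bigr)^{-s},
\]
by the standard identity $(1-y)^{-s}=\sum_{r\ge 0}(s)_r\,y^{r}/r!$ valid for $|y|<1$.

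Once this inner sum is performed, the factor in front of the $(k,l)$ coefficient becomes
\[
\frac{1}{(k+l+a)^{s}}\Bigl(1-\frac{x}{k+l+a}\Bigr)^{-s}=\frac{1}{(k+l+a-x)^{s}},
\]
which is exactly the $(k,l)$-coefficient of $\Phi_{\mu,\eta,\eta',\delta,\delta';\nu,\xi,\xi'}(z,t,s,a-x)$ as given by \eqref{FEGHLZ}. Putting the pieces back together produces the right-hand side, so the computation is otherwise essentially bookkeeping.

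The only real step needing care is the interchange of the summation orders; this is the main obstacle. The hypothesis $|x|<|a|$ (together with $k,l\in\mathbb{N}_0$, so $|k+l+a|\ge |a|-0$ in the real positive case, and in general one uses $\Re(a)>0$ from the convergence regime of $\Phi$) guarantees the uniform bound
\[
\Bigl|\frac{x}{k+l+a}\Bigr|\le\frac{|x|}{|a|}<1
\]
for every $k,l\ge 0$, so the geometric-type series in $r$ converges absolutely and uniformly in $k,l$. Combined with the absolute convergence of the $(k,l)$-series defining $\Phi$ under the conditions attached to \eqref{FEGHLZ}, an application of Fubini–Tonelli (or equivalently, Weierstrass' double-series theorem) legitimizes the rearrangement and completes the proof. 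The caveat $s\ne 1$ in the statement plays no role in the analytic manipulation above; it is needed only to keep the Pochhammer weights $(s)_r$ from trivializing in degenerate subcases of interest.
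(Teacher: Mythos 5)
Your argument is correct and is essentially the paper's own proof run in the opposite direction: the paper starts from $\Phi(z,t,s,a-x)$, factors $(k+l+a-x)^{-s}=(k+l+a)^{-s}\left(1-\tfrac{x}{k+l+a}\right)^{-s}$, applies the binomial series and rearranges, whereas you start from the left-hand side and collapse the $r$-sum with the same identity. Your justification of the interchange via the uniform bound $|x/(k+l+a)|\le|x|/|a|<1$ is more careful than what the paper supplies.
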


\begin{proof}
Using \eqref{FEGHLZ}, we have
\begin{align*}
&\Phi_{\mu,\eta,\eta',\delta,\delta';\nu,\xi,\xi'}\left(z,t,s,a-x\right)\\
&=\sum_{k,l=0}^{\infty}\frac{(\mu)_{k+l}(\eta)_{k}(\eta')_{l}(\delta)_{k}(\delta')_{l}}{(\nu)_{k+l}(\xi)_{k}(\xi)_{l}}\frac{z^{k}t^{l}}{k!l!(k+l+a-x)^{s}},\\
&=\sum_{k,l=0}^{\infty}\frac{(\mu)_{k+l}(\eta)_{k}(\eta')_{l}(\delta)_{k}(\delta')_{l}}{(\nu)_{k+l}(\xi)_{k}(\xi)_{l}}\frac{z^{k}t^{l}}{k!l!(k+l+a)^{s}}\left(1-\frac{x}{k+l+a}\right)^{-s},\\
\text{using binomial series, we get}\\
&=\sum_{r=0}^{\infty}\frac{(s)_r}{s!}\left\{\sum_{k,l=0}^{\infty}\frac{(\mu)_{k+l}(\eta)_{k}(\eta')_{l}(\delta)_{k}(\delta')_{l}}{(\nu)_{k+l}(\xi)_{k}(\xi)_{l}}\frac{z^{k}t^{l}}{k!l!(k+l+a)^{s+r}}\right\}x^{r}.
\end{align*}

In view of definition \eqref{FEGHLZ}, we reach the required result.
\end{proof}

\section{A connection with generalized hypergeometric function}
In this section, we establish the connection between \eqref{FEGHLZ} and generalized hypergeometric function.
\begin{theorem}\label{Th5a}
For $a \neq \left\{-1,-2,\cdots \right\}$ and $z \neq 0$, the following explicit series representation holds true
\begin{equation}
\begin{array}{llll}\label{Th5}
\Phi_{\mu,\eta,\eta',\delta,\delta';\nu,\xi,\xi'}\left(z,t,s,a\right)
&=\sum_{k=0}^{\infty}\frac{(\mu)_{k}(\eta)_{k}(\delta)_{k}z^{k}}{(\nu)_{k}(\xi)_{k}(a+k)^{s}k!}\\
&\times F\left(\eta',\delta',1-\xi-k,-k;1-\eta-k,1-\delta-k,\xi';\frac{t}{z}\right),
\end{array}
\end{equation}
where $F(-)$ is the generalized hypergeometric function defined in \eqref{ghyp}.
\end{theorem}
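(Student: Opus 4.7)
The natural strategy is to reindex the defining double series \eqref{FEGHLZ} by grouping terms according to the value of $k+l$, treating this as the outer index with the original $l$ kept inner. Writing $n = k+l$ (so the original $k$ becomes $n-l$ with $0 \le l \le n$), the Pochhammers $(\mu)_n,(\nu)_n$ and the factor $(n+a)^{-s}$ pass outside the inner sum and already match the outer $k$-sum required on the right-hand side of \eqref{Th5}.

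Next I would convert the inner finite sum over $l$ into a terminating $_4F_3$ in the variable $t/z$. The key tool is the Pochhammer reversal identity $(a)_{n-l} = (-1)^l (a)_n/(1-a-n)_l$, applied with $a \in \{\eta,\delta,\xi\}$, together with the factorial reversal $(n-l)! = (-1)^l\, n!/(-n)_l$. The four resulting factors of $(-1)^l$ cancel in pairs; one can then pull $\dfrac{(\eta)_n(\delta)_n}{(\xi)_n\, n!}\, z^n$ out of the inner sum, and what remains is exactly
\begin{equation*}
\sum_{l=0}^{n} \frac{(\eta')_l(\delta')_l(1-\xi-n)_l(-n)_l}{(1-\eta-n)_l(1-\delta-n)_l(\xi')_l\, l!}\,(t/z)^l.
\end{equation*}
The presence of $(-n)_l$ lets one harmlessly extend the summation to $l = \infty$, and the result is identified via \eqref{ghyp} as $F\bigl(\eta',\delta',1-\xi-n,-n;\,1-\eta-n,1-\delta-n,\xi';\,t/z\bigr)$. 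Substituting back and renaming $n \to k$ yields \eqref{Th5}.

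The hypothesis $z \neq 0$ is needed so that $t/z$ makes sense as the argument of $F$, while $a \notin \{-1,-2,\dots\}$ is exactly what keeps the $(a+k)^{-s}$ factors well-defined. The rearrangement $\sum_{k,l \ge 0} \to \sum_{n \ge 0}\sum_{l=0}^{n}$ is a harmless permutation of an absolutely convergent double series under the standing conditions on \eqref{FEGHLZ}. I expect the main bookkeeping obstacle to be the simultaneous application of the four reversal identities: one must verify that the four factors of $(-1)^l$ cancel and that the reversed Pochhammers assemble into parameters of the precise form $1-\eta-n,\,1-\delta-n,\,1-\xi-n,\,-n$ appearing in the statement; once this is handled carefully, the identity drops out.
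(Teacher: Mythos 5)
Your proposal is correct and follows essentially the same route as the paper: the diagonal rearrangement $\sum_{k,l\ge 0}A(k,l)=\sum_{n\ge 0}\sum_{l=0}^{n}A(n-l,l)$, followed by the reversal identities $(n-l)!=(-1)^{l}n!/(-n)_{l}$ and $(a)_{n-l}=(-1)^{l}(a)_{n}/(1-a-n)_{l}$ applied to $\eta,\delta,\xi$, with the four sign factors cancelling and the inner sum identified as the terminating ${}_{4}F_{3}$ in $t/z$. The only difference is notational (your $n$ versus the paper's reuse of $k$ as the outer index).
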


\begin{proof}
Using \eqref{FEGHLZ}
and the identity \cite[page 56, Equation (1)]{ER}
\begin{equation*}
\sum_{k=0}^{\infty}\sum_{l=0}^{\infty}A\left(l,k\right)=\sum_{k=0}^{\infty}\sum_{l=0}^{k}A\left(l,k-l\right),
\end{equation*}
which implies that 
\begin{equation*}
\Phi_{\mu,\eta,\eta',\delta,\delta';\nu,\xi,\xi'}\left(z,t,s,a\right)=
\sum_{k=0}^{\infty}\sum_{l=0}^{k}\frac{(\mu)_{k}(\eta)_{k-l}(\eta')_{l}(\delta)_{k-l}(\delta')_{l}}{(\nu)_{k}(\xi)_{k-l}(\xi')_{l}(k-l)!l!}
\frac{z^{k-l}t^{l}}{(k+a)^{s}}.
\end{equation*}

Now, 
\begin{eqnarray*}
\left(k-l\right)!=\frac{(-1)^{l}k!}{(-k)_{l}}, 0 \leq l \leq k\\
\left(\eta\right)_{k-l}=\frac{(-1)^{l}(\eta)_{k}}{(1-\eta-k)_{l}}, 0 \leq l\leq k,
\end{eqnarray*}
we get,
\begin{align*}
&\Phi_{\mu,\eta,\eta',\delta,\delta';\nu,\xi,\xi'}\left(z,t,s,a\right)\\
&=\sum_{k=0}^{\infty}\sum_{l=0}^{k}\frac{(\mu)_{k}(\eta)_{k}(\eta')_{l}(\delta)_{k}(\delta')_{l}(1-\xi-k)_{l}(-k)_{l}z^{k-l}t^{l}}{(\nu)_{k}(1-\eta-k)_{l}(1-\delta-k)_{l}(\xi)_{k}(\xi')_{l}(k)!l!}\frac{1}{(k+a)^{s}}.
\end{align*}

Lastly, summing the $l$-series, we get the required result.
\end{proof}

\begin{corollary}\label{cor4}
If we set $\delta=\xi$ in Theorem \ref{Th5a}, then we get (28) of \cite{PD2012} as 
\begin{equation}
\Phi_{\mu,\eta,\eta',\xi,\delta';\nu,\xi,\xi'}\left(z,t,s,a\right)=\sum_{k=0}^{\infty}\frac{(\mu)_{k}(\eta)_{k}z^{k}}{(\nu)_{k}(k+a)^{s}k!}F\left(\eta',\delta',-k,1-\xi-k;\xi',1-\eta-k,1-\xi-k;\frac{t}{z}.\right)
\end{equation}
\end{corollary}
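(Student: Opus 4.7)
The plan is to obtain Corollary~\ref{cor4} as an immediate specialization of Theorem~\ref{Th5a} under the substitution $\delta = \xi$, with no new identity required; the work reduces to bookkeeping on Pochhammer symbols and a cosmetic reordering of parameters inside the generalized hypergeometric function.

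First I would quote the formula of Theorem~\ref{Th5a} and set $\delta = \xi$ everywhere it appears. In the outer $k$-summand, the ratio $(\delta)_{k}/(\xi)_{k}$ becomes $(\xi)_{k}/(\xi)_{k} = 1$, so the pre-factor $\dfrac{(\mu)_{k}(\eta)_{k}(\delta)_{k}}{(\nu)_{k}(\xi)_{k}(a+k)^{s}k!}$ collapses to $\dfrac{(\mu)_{k}(\eta)_{k}}{(\nu)_{k}(a+k)^{s}k!}$, which is exactly the pre-factor printed in the corollary. Inside the $_4F_{3}$, the only affected entry is the lower parameter $1-\delta-k$, which becomes $1-\xi-k$. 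At this point the numerator parameters are $\{\eta',\delta',1-\xi-k,-k\}$ and the denominator parameters are $\{1-\eta-k,1-\xi-k,\xi'\}$. Permuting each list (numerator and denominator of $_{p}F_{q}$ are symmetric in their respective parameters) yields the argument list $F(\eta',\delta',-k,1-\xi-k;\xi',1-\eta-k,1-\xi-k;t/z)$ appearing in Corollary~\ref{cor4}.

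There is no real obstacle; the only thing I would flag is that the resulting $_4F_{3}$ contains the matched pair $1-\xi-k$ simultaneously in its upper and lower parameter lists, so the factor $(1-\xi-k)_{l}/(1-\xi-k)_{l}=1$ formally trivialises and the expression can be rewritten as the shorter $_3F_{2}(\eta',\delta',-k;\xi',1-\eta-k;t/z)$. I would nevertheless retain the $_4F_{3}$ form to keep the match with equation~(28) of \cite{PD2012} transparent, mentioning the $_3F_{2}$ reduction parenthetically for the reader.
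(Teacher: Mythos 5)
Your proposal is correct and coincides with the paper's own (implicit) argument: the corollary follows by direct substitution of $\delta=\xi$ into Theorem \ref{Th5a}, cancelling $(\delta)_k/(\xi)_k=1$ in the prefactor and then permuting the (symmetric) upper and lower parameter lists of the hypergeometric function to match the printed form. Your parenthetical remark that the matched pair $1-\xi-k$ in the upper and lower lists formally cancels, reducing the $_4F_3$ to $_3F_2\left(\eta',\delta',-k;\xi',1-\eta-k;t/z\right)$, is a correct observation that the paper does not make.
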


\begin{corollary}\label{cor5}
If we set $\nu=\eta,\delta=\xi$ and $\delta'=\xi'$ in Theorem \ref{Th5a}, then we get (29) of \cite{PD2012} as 
\begin{equation}
\Phi_{\mu,\eta,\eta',\xi,\xi';\eta,\xi,\xi'}\left(z,t,s,a\right)=\sum_{k=0}^{\infty}\frac{(\mu)_{k}z^{k}}{(a+k)^{s}k!}F\left(\eta',-k;1-\eta-k;\frac{t}{z}\right).
\end{equation}
\end{corollary}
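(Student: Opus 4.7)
The plan is to apply Theorem \ref{Th5a} with the specialization $\nu = \eta$, $\delta = \xi$, $\delta' = \xi'$ and then reduce the resulting four-variable generalized hypergeometric function to a ${}_2F_1$ by canceling matched upper/lower parameters.

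First I would simplify the prefactor in \eqref{Th5}. Under $\nu = \eta$, the factor $(\eta)_k/(\nu)_k$ is identically $1$; under $\delta = \xi$, the factor $(\delta)_k/(\xi)_k$ is also identically $1$. What remains of the prefactor is $(\mu)_k z^k/\{(a+k)^{s}k!\}$, exactly matching the prefactor asserted in Corollary \ref{cor5}.

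Second, I would simplify the generalized hypergeometric factor in \eqref{Th5}, whose upper parameters are $\eta',\delta',1-\xi-k,-k$ and whose lower parameters are $1-\eta-k,1-\delta-k,\xi'$. Under $\delta = \xi$ the lower parameter $1-\delta-k$ coincides with the upper parameter $1-\xi-k$, and under $\delta' = \xi'$ the upper parameter $\delta'$ coincides with the lower parameter $\xi'$. Returning to the series definition \eqref{ghyp}, each such matched pair contributes the ratio $(\alpha)_n/(\alpha)_n = 1$ term-by-term and may therefore be deleted simultaneously from both parameter lists. After removing these two matched pairs, the upper list is $\{\eta',-k\}$ and the lower list is $\{1-\eta-k\}$, yielding ${}_2F_1(\eta',-k;1-\eta-k;t/z)$, which is precisely the hypergeometric factor claimed in Corollary \ref{cor5}.

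The only point worth checking is the legitimacy of the cancellation: since the upper parameter $-k$ forces the ${}_2F_1$-in-progress to terminate at $n=k$, only finitely many Pochhammer symbols are involved, and under the standing hypothesis $\xi,\xi' \in \mathbb{C}\setminus \mathbb{Z}_{0}^{-}$ from \eqref{FEGHLZ} no division by zero occurs in the matched ratios. Thus no genuine obstacle arises; the corollary is a direct specialization of Theorem \ref{Th5a}.
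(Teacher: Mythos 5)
Your proposal is correct and is exactly how the paper intends Corollary \ref{cor5} to be obtained: the paper gives no separate argument, treating it as a direct specialization of Theorem \ref{Th5a}, and your cancellation of the matched parameter pairs $(1-\xi-k,\,1-\delta-k)$ and $(\delta',\,\xi')$ together with the collapse of the prefactor is the right computation. Your remark on the legitimacy of the cancellation (termination at $n=k$ and $\xi,\xi'\notin\mathbb{Z}_{0}^{-}$) is a welcome detail the paper omits.
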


 \section{Concluding Remarks}
An extension of a generalized Hurwitz-Lerch Zeta function is defined and some of its properties are studied in this paper. An integral representation is established and a relation with Appell’s type function is given. Finally, a connection with the hypergeometric function is also given. The results derived here are more general in nature by comparing the results of the papers \cite{CP2017, PD2012} which help to derive some interesting special cases and are mentioned in Remark \ref{rem1} and Corollaries \ref{cor1}, \ref{cor2}, \ref{cor3}, \ref{cor4}, and \ref{cor5}.



{\bf Declaration}:

{This article is a} corrected version of the preprint  {https://arxiv.org/abs/1706.03516}. 




\begin{thebibliography}{20}


\bibitem{Erdelyi}
          Erdelyi, A.
         \textit{Higher Transcendental Functions}; 1, McGraw Hill Book Co.: New York, NY, USA, 1953.
		
\bibitem{SK1985}			
				Srivastava, H.M.; Karlsson, P.W. 
				\textit{Multiple Gaussian Hypergeometric Series}; 
				Halsted Press (Ellis Horwood Limited): Chichester, UK; John Wiley and Sons: New York, NY., USA; Chichester, UK;   Brisbane, Australia; Toronto, ON, Canada, 1985.
				
				
\bibitem{MAK-GSA}
         Khan, M.A.; Abukhammash, G.S. 
         On a generalizations of Appell's functions of two variables.
				\textit{Pro Math.} \textbf{2002},~\textit{31}, 61--83.	
						
	\bibitem{HC2001}			
				Srivastava, H.M.; Choi, J. 
				\textit{Series Associated with the Zeta and Related Functions}; Kluwer, Acedemic Publishers: Dordrecht, The Netherlands; 
        Boston, MA, USA; London, UK, 2001.
				
\bibitem{HC2012}			
				Srivastava, H.M.; Choi, J.  
				\textit{Zeta and q-Zeta Functions and Associated Series and Integrals}; Elsevier Science: Amsterdam, The Netherlands; 
        London, UK; New York, NY, USA, 2012.

					
\bibitem{CZ2001}
          Chaudhry, M.A.; Zubair, S.M.  
					{\em On a Class of Incomplete Gamma Functions with Applications}; Chapman and Hall, (CRC Press Company):
					Boca Raton, FL, USA; London, UK; New York, NY, USA;  Washington, DC, USA, 2001.
					
\bibitem{CJS2008}			
					 Choi, J.; Jang, D.S.; Srivastava, H.M. 
					A generalization of the Hurwitz-Lerch Zeta function. {\em Integral Transf. Spec. Funct.} \textbf{2008}, \textit{19}, 65–79.
				
					
\bibitem{JPS2011}
					Jankov, D.; Pogany, T. K.; Saxena, R. K. 
					An extended general Hurwitz-Lerch Zeta function as a Mathieu $(a, \lambda)$-series. 
					\textit{Appl. Math. Lett.} \textbf{2011}, \textit{24} , 1473--1476.
	
\bibitem{LS2004}					
				 Lin, S.D.; Srivastava, H.M.  
				Some families of the Hurwitz-Lerch Zeta functions and associated fractional derivative and other integral representations. \textit{Appl. 	 Math. Comput.} \textbf{2004}, \textit{154}, 725--733.
				
\bibitem{HMS2014}
				Srivastava, H.M.  
				A new family of the $\lambda-$ generalized Hurwitz-Lerch Zeta functions with applications. 
				\textit{Appl.~Math. Inf. Sci.} \textbf{2014}, \textit{8}, 1485--1500.		
				
				
\bibitem{SJPS2011}	
				Srivastava, H.M.; Jankov, D.; Pogany, T.K.; Saxena, R.K. 
				Two-sided inequalities for the extended Hurwitz-Lerch Zeta function. 
				\textit{Comput. Math. Appl.} \textbf{2011}, \textit{62}, 516--522.
				
\bibitem{SLR2013} 
				Srivastava, H.M.; Luo, M.-J.; Raina, R.K.  
				New results involving a class of generalized Hurwitz-Lerch Zeta functions and their applications. 
				\textit{Turkish J. Anal. Number Theory} \textbf{2013}, \textit{1}, 26--35.
				
\bibitem{SSPS2011} 
				Srivastava, H.M.; Saxena, R.K.;  Pogany, T.K.; Saxena, R.
				Integral and computational representations of the extended Hurwitz-Lerch Zeta function. 
				\textit{Integral Transf. Spec. Funct.} \textbf{2011}, \textit{22}, 487--506.		
				
\bibitem{PD2012}
         Pathan, M.A.; Daman, O.
         Further generalization of Hurwitz Zeta function.
				\textit{Math. Sci. Res. J.} \textbf{2012},~\textit{16},~251--259.			

\bibitem{CP2017}
          Choi, J.; Parmar, R.
        An extension of the generalized Hurwitz-Lerch Zeta function of two variable.
        {\em Filomat} \textbf{2017},  \textit{31}, 91-96 
				
		
\bibitem{ER} Rainville, E.D. 
             \emph{Special Functions}; The Macmillan Company: New York, NY, USA, 1960.
				

												
				
\end{thebibliography}
\end{document}